\documentclass[11pt]{amsart}
\usepackage{amstext}
\usepackage{amsthm}
\usepackage{amsopn}
\usepackage{amsfonts}
\usepackage{amsmath}
\usepackage{amssymb}
\usepackage{amscd}
\usepackage{pgf,pgfnodes}
\usepackage{wrapfig}
\usepackage{amscd}

\newtheorem{theorem}{Theorem}[section]

\newtheorem{proposition}[theorem]{Proposition}

\newtheorem{conj}{Conjecture}

\theoremstyle{definition}

\theoremstyle{remark}
\newtheorem{remark}[theorem]{Remark}

\numberwithin{equation}{section}

\def\q{\quad}
\def\Qbar{\text{\sl Q\kern-.45em{\vrule height.63em width.05em
depth-.033em}}~}
\def\qbar{{{\scriptstyle Q}\kern-.45em{\vrule height.41em width.035em
depth-.03em}}~}
\def\Cbar{\text{\sl C\kern-.35em{\vrule height.63em width.05em
depth-.033em}}~}
\def\cbar{{{\scriptstyle C}\kern-.41em{\vrule height.42em width.035em
depth-.03em}}~}
\def\ibid{\hbox to .5truein{\hrulefill}}
\def\IB{\text{{\rm I}\kern-.13em{\rm B}}}

\def\Z{\Bbb Z}
\def\IR{\text{{\rm I}\kern-.13em{\rm R}}}

\def\twoheaddown{\downarrow\kern-0.78em\raise0.25em\hbox{$\downarrow$
}}
\def\headtaildown{\downarrow\kern-0.79em\raise
0.5em\hbox{$\ssize\curlyvee$}}
\def\lda{\downarrow\kern-.655em\raise0.5em\hbox{$\vert$
}}

\def\bs{\backslash}
\def\la{\langle}
\def\ra{\rangle}
\def\wt{\widetilde}

\def\os{\overset}
\def\us{\underset}

\def\Hom{\text{\rm Hom}}
\def\Det{\text{\rm Det}}
\def\Gal{\text{\rm Gal}}
\def\ind{\text{\rm ind}}

\def\fP{{\frak p}}

\def\dfrac#1#2{{\displaystyle {{#1}}\over{\displaystyle {#2}}}}

\def\suba{\subset\kern-.6em\raisebox{-.27em}{$\rightarrow$}}

\setcounter{page}{1}

\begin{document}

\title[Evidence toward a $2$-adic equivariant ``main conjecture'']{Numerical evidence toward a $2$-adic equivariant ``main conjecture''}
\author[X.-F. Roblot]{Xavier-Fran\c{c}ois Roblot$^*$}
\thanks{$^*$supported in part by the ANR (projet \textit{AlgoL})}
\address{Institut Camille Jordan, Universit\'e de Lyon, Universit\'e
  Lyon~1, CNRS -- UMR 5208, 43 blvd du 11 Novembre 1918, 69622 Villeurbanne Cedex, France}
\author[A. Weiss]{Alfred Weiss$^\dagger$}
\thanks{$^\dagger$financial support from the NSERC is gratefully aknowledged}
\address{Departement of Mathematics, University of Alberta, Edmonton, AB T6G 2G1, Canada}
\date{}

\maketitle
\section{The conjecture}

Let $K$ be a totally real finite Galois extension of $\Qbar$ with Galois group $G$ dihedral of order $8,$ and suppose that $\sqrt 2$ is not in $K.$  Fix a finite set $S$ of primes of $\Qbar$ including $2,\infty  $ and all primes that ramify in $K.$ Let $C$ be the cyclic subgroup of $G$ of order $4$ and $F$ the fixed field of $C$ acting on $K.$  Fix a $2$-adic unit $u\equiv 5 \mod 8\Z_2.$

Write $L_F(s,\chi )$ for the $2$-adic $L$-functions, normalized as in \cite{W}, of the $2$-adic characters $\chi$ of $C$ or, equivalently by class field theory, of the corresponding $2$-adic primitive ray class characters.  We always work with their $S${\it -truncated} forms
$$
L_{F,S}(s,\chi  ) = L_F(s,\chi) \prod_{\fP} \left(1-\frac{\chi ({\fP})}{N(\fP)} \langle N({\fP}) \rangle^{1-s}\right)
$$
where ${\fP}$ runs through all primes of $F$ above $S\bs\{2,\infty \}$, and $\langle\,\rangle : \mathbb{Z}_2^\times \to 1 + 4\mathbb{Z}_2$ is the unique function with $\langle x\rangle x^{-1} \in \{-1, 1\}$ for all $x$. Now our interest is in the $2$-adic function
$$
f_1(s) = \frac{\rho_{F,S}\log(u)}{8(u^{1-s}-1)} +\frac 18 \left(L_{F,S}(s,1) + L_{F,S}(s,\beta^2) - 2L_{F,S}(s,\beta)\right)
$$
where $\beta  $ is a faithful irreducible $2$-adic character of $C$ and
$$
\rho_{F,S} = \lim_{s\to1} (s-1)L_{F,S}(s,1).
$$
It follows from known results that $\frac 12 \rho_{F,S}\in \Z_2$ and that $f_1(s)$ is an {\it Iwasawa analytic} function of $s\in \Z_2,$ in the sense of \cite{R}.  This means that there is a unique power series $F_1(T)\in \Z_2[[T]]$ so that
$$
F_1(u^n-1) = f_1(1-n)\q\text{\rm for}\q n=1,2,3,\dots\,.
$$
The conjecture we want to test is 
\begin{conj}\label{conj1}
$$
\frac 12 \rho_{F,S} \in 4\Z_2\q\text{\rm and}\q F_1(T)\in 4\Z_2[[T]].
$$
\end{conj}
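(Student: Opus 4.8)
The plan is to deduce both assertions from the Iwasawa main conjecture for the cyclotomic $\Z_2$-extension $K_\infty$ of $K$, read off along the \emph{abelian} subtower $K\supset F$. Write $F_\infty$ for the cyclotomic $\Z_2$-extension of $F$ and $\Gamma=\Gal(F_\infty/F)$. Since $K$ is totally real and $\sqrt2\notin K$ we have $K\cap F_\infty=F$ (otherwise $K$ would contain the first layer $F(\sqrt2)$), so $K_\infty=KF_\infty$, $\Gal(K_\infty/F)=C\times\Gamma$, and the pertinent Iwasawa algebra is $\Lambda=\Z_2[C][[\Gamma]]=\Z_2[C][[T]]$ with $T=\gamma-1$. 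Let $\Theta_S$ be the equivariant $S$-truncated $2$-adic $L$-element of $K/F$ --- the element of (the fraction ring of) $\Lambda$ whose image under each $2$-adic character $\chi$ of $C$ is the Iwasawa power series of $L_{F,S}(\,\cdot\,,\chi)$ --- and expand $\Theta_S=\sum_{c\in C}b_c(T)\,c$, so $b_c(T)=\tfrac{1}{4}\sum_\chi\overline{\chi(c)}\,L_{F,S}(\,\cdot\,,\chi)$. Because $G=D_8$ contains an element inverting $C$, the characters $\beta$ and $\beta^3$ induce the same representation of $G$, hence $L_{F,S}(s,\beta)=L_{F,S}(s,\beta^3)$ by Artin formalism; feeding this into the coefficient at the central involution $r^2\in C$ gives
$$
b_{r^2}(T)=\tfrac{1}{4}\bigl(L_{F,S}(\,\cdot\,,1)+L_{F,S}(\,\cdot\,,\beta^2)-2\,L_{F,S}(\,\cdot\,,\beta)\bigr).
$$
The term $\tfrac{\rho_{F,S}\log u}{8(u^{1-s}-1)}$ in $f_1$ is precisely the correction removing the simple pole of $L_{F,S}(s,1)$ at $s=1$ --- the ``trivial zero'' in the $T$-adic picture coming from the local factor at $2$ --- so, writing $\widetilde{\Theta}_S$ for the trivial-zero-removed version of $\Theta_S$ and $\widetilde{b}_{r^2}$ for its $r^2$-coefficient, one gets $f_1=\tfrac{1}{2}\widetilde{b}_{r^2}$ (the factor $\tfrac{1}{2}=|C|/|G|$), which is a genuine power series in $T$. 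Thus the conjecture $F_1(T)\in4\Z_2[[T]]$ is equivalent to $\widetilde{b}_{r^2}(T)\in8\Z_2[[T]]$, while the known weaker statement $F_1(T)\in\Z_2[[T]]$ reads $\widetilde{b}_{r^2}(T)\in2\Z_2[[T]]$.

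Next one invokes the main conjecture for $K_\infty/F$. Since $K/F$ is abelian over a totally real field, the integral equivariant $2$-adic $L$-element $\widetilde{\Theta}_S$ exists (Deligne--Ribet), and the main conjecture --- Wiles for $p$ odd, and for $p=2$ through the work of Ritter--Weiss and Kakde granting $\mu=0$ (known for the abelian constituents by Ferrero--Washington, and for the remaining two-dimensional constituent a case of Iwasawa's $\mu=0$ conjecture, which we assume or verify directly) --- identifies $\widetilde{\Theta}_S$, via the connecting map of relative $K$-theory, with the class of the $S$-ramified Iwasawa module $X_\infty$ of $K_\infty$ as a $\Lambda$-module; equivalently, for each $\chi$ the regularized Iwasawa series of $L_{F,S}(\,\cdot\,,\chi)$ generates $\mathrm{char}_{\Z_2[\chi][[T]]}(X_\infty^{(\chi)})$, the normalizations being tied together by $\widetilde{\Theta}_S$ itself. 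Under this dictionary the conjecture becomes the assertion that the $r^2$-coefficient of a characteristic element of the $\Lambda$-module $X_\infty$ lies in $8\Z_2[[T]]$; concretely --- using $L_{F,S}(s,\beta)=L_{F,S}(s,\beta^3)$ again --- the $\beta$-isotypic part $X_\infty^{(\beta)}$ coincides with $X_\infty^{(\beta^3)}$ (the $\psi$-part over $\Qbar$), which already forces one power of $2$, and one must show moreover that $\mathrm{char}(X_\infty^{(\beta)})$ divides $\mathrm{char}(X_\infty^{(1)})\,\mathrm{char}(X_\infty^{(\beta^2)})$ with quotient characteristic ideal contained in $(8)$.

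The auxiliary claim $\tfrac{1}{2}\rho_{F,S}\in4\Z_2$ is of the same nature but more tractable: by the $2$-adic analytic class number formula for the real quadratic field $F$ one has $\rho_{F,S}=(\text{$2$-adic unit})\cdot2\,h_F\,R_{2,F}\cdot\prod_{\fP}\bigl(1-N(\fP)^{-1}\bigr)$, with $\fP$ running over the primes of $F$ above $S\setminus\{2,\infty\}$. Since $G=D_8$ is nonabelian, $K^{\mathrm{ab}}$ is a totally real biquadratic field; if $K$ were unramified outside $2$, then $K^{\mathrm{ab}}$, being totally real, would lie in the cyclotomic $\Z_2$-extension of $\Qbar$ by Kronecker--Weber, whose finite layers are cyclic --- impossible. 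Hence $K/\Qbar$ ramifies at an odd prime, the product is nonempty, each of its factors has $v_2\ge1$, and together with the $2$-divisibility of the regulator $R_{2,F}=\log_2\langle\epsilon\rangle$ (as $\langle\,\cdot\,\rangle$ takes values in $1+4\Z_2$) and the explicit factor $2$ one reads off $v_2(\tfrac{1}{2}\rho_{F,S})\ge2$, the case of $2$ non-split in $F$ needing only the contribution of the finite Euler factor.

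\textbf{Main obstacle.} Two steps carry the weight. The first is the $p=2$ descent from the Iwasawa-algebra form of the main conjecture to the scalar statement about $\widetilde{b}_{r^2}$, together with the matching of normalizations --- the $\tfrac{1}{8}$, the period $\log u$, the residue $\rho_{F,S}$, and especially the trivial-zero correction --- which is markedly more delicate at $p=2$ than for odd primes. The second, and the real heart, is proving the structural divisibility just described: that the quotient of $\mathrm{char}(X_\infty^{(1)})\,\mathrm{char}(X_\infty^{(\beta^2)})$ by $\mathrm{char}(X_\infty^{(\beta)})$ lies in $(8)$. This is a genuine assertion about the size of a $\beta$-isotypic Iwasawa module in the (non-abelian over $\Qbar$) $D_8$-tower --- equivalently, that a suitable unit or cohomology group attached to $K_\infty$ is as large as its generic rank permits at every finite layer --- and it does not follow formally from the main conjecture; it is precisely this input for which the present paper supplies numerical evidence in place of a proof. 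A Kakde-style alternative would read ``$F_1\in\Z_2[[T]]$'' as a first congruence among the abelian $2$-adic $L$-series entering $f_1$ and ``$F_1\in4\Z_2[[T]]$'' as a deeper one, but obtaining that deeper congruence directly from the Deligne--Ribet congruences seems out of reach, again returning one to the structure of $X_\infty$.
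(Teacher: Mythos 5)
The key thing to say up front: the statement you are ``proving'' is a conjecture, and the paper does not prove it either --- it establishes (Propositions~\ref{Prop1} and \ref{Prop2}, and the reformulation in \S\ref{sec:rewriting}) that Conjecture~\ref{conj1} is equivalent, in this special case, to the equivariant ``main conjecture'' stated as Conjecture~\ref{conj2}, and then tests it numerically. Your second paragraph, which reduces $F_1(T)\in 4\Z_2[[T]]$ to a structural divisibility among characteristic ideals of Iwasawa modules and explicitly concedes that ``it does not follow formally from the main conjecture,'' is thus in the right spirit; your framing over $F$ via $\Lambda=\Z_2[C][[T]]$ and the coefficient $b_{r^2}$ is a perfectly reasonable abelianized shadow of what the paper does over $\Qbar$ with the pullback diagram for $\Z_2[D_8]$, the reduced norm of ${\frak A}$, and $K_1(\Lambda(G_\infty)_\bullet)$. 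The paper's non-abelian formulation buys the precise \emph{equivalence} with Conjecture~\ref{conj2}; your $F$-level reformulation gives a heuristic but not that equivalence.

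The genuine error is in your last paragraph, where you claim to \emph{prove} $\tfrac12\rho_{F,S}\in 4\Z_2$ by elementary estimates from Colmez's residue formula. This cannot be right, for two independent reasons. First, your expression $\rho_{F,S}=(\text{$2$-adic unit})\cdot 2\,h_F\,R_{2,F}\prod_{\fP\mid S\setminus\{2,\infty\}}(1-N\fP^{-1})$ buries $d_F^{-1/2}$ and the Euler factors at primes above $2$ in the ``$2$-adic unit,'' but these are typically not units: if $2$ splits in $F$ there are two primes of norm $2$, contributing $v_2\!\left(\prod_{\fP\mid 2}(1-1/N\fP)\right)=-2$; if $2$ ramifies, $d_F$ is divisible by $4$, so $v_2(d_F^{-1/2})<0$ as well. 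These negative contributions more than cancel the positive ones you tally, and the naive count gives at best $v_2(\tfrac12\rho_{F,S})\ge 1$ --- i.e.\ the ``known'' statement $\tfrac12\rho_{F,S}\in\Z_2$ noted in \S1, not the conjectured $4\Z_2$. Second, and decisively, the paper reports in \S6 that for $30$ examples where $K/F$ is cyclic of order $4$ but $K/\Qbar$ is \emph{not} dihedral, the conjecture fails --- in some cases precisely because $\rho_{F,S}\notin 8\Z_2$. Your proposed proof uses only data about $F$ and $S$ (class number, regulator, ramification) and never the dihedral hypothesis on $K/\Qbar$, so if it were correct it would apply verbatim to those counterexamples. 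The $\tfrac12\rho_{F,S}\in 4\Z_2$ statement is as genuinely conjectural as the power-series assertion, and should be treated as such.
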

Testing the conjecture amounts to calculating $\frac 12 \rho_{F,S}$ and (many of) the power series coefficients of
$$
F_1(T) =\sum^\infty_{j=1} x_j T^{j-1}
$$
modulo $4\Z_2.$  Were the conjecture false we would expect to find a counterexample in this way.

The idea of the calculation is, roughly, to  express the coefficients of the power series $F_1(T)$ as integrals over suitable $2$-adic continuous functions with respect to the measures used to construct the $2$-adic $L$-functions. 

The conjecture has been tested for $60$ fields $K$ determined by the size of their discriminant and the splitting of $2$ in the field $F$. For this purpose, it is convenient to replace the datum $K$ by $F$ together with the ray class characters of $F$ which determine $K$ (cf \S 5).  A description of the results is in \S 6: they are affirmative.

Where does $f_1(s)$ come from?  It is an example which arises by attempting to refine the Main Conjecture of Iwasawa theory. This connection will be discussed next in order to prove that $F_1(T)$ is in $\Z_2[[T]].$

\section{The motivation}
The Main Conjecture of classical Iwasawa theory was proved by Wiles \cite{W} for odd prime numbers $\ell.$  More recently \cite{RW2}, an equivariant ``main conjecture'' has been proposed, which would both generalize and refine the classical one for the same $\ell.$  When a certain $\mu$-invariant vanishes, as is expected for odd $\ell$ (by a conjecture of Iwasawa), this equivariant ``main conjecture'', up to its uniqueness assertion, depends only on properties of $\ell$-adic $L$-functions, by Theorem A of \cite{RW3}.

The point is that it is possible to numerically test this Theorem A property of $\ell$-adic $L$-functions, at least in simple special cases when it may be expressed in terms of congruences and the special values of these $L$-functions can be computed.  The conjecture of \S1 is perhaps the simplest non-abelian example when this happens, but with the price of taking $\ell=2.$ Although there are some uncertainties about the formulation of the ``main conjecture'' for $\ell=2,$ partly because \cite{W} no longer applies, it seems clearer what the $2$-adic analogue of the Theorem~A properties of $L$-functions should be, in view of their ``extra'' $2$-power divisibilities \cite{DR}.

More precisely, let $L_{k,S}\in \Hom^*\left(R_\ell (G_\infty),\mathcal{Q}^{\,c}(\Gamma _k)^\times\right)$ be the ``power series'' valued function of $\ell$-adic characters $\chi $ of $G_\infty = \,\Gal(K_\infty  /k)$ defined in \S4 of \cite{RW2}.  This is made from the values of $\ell$-adic $L$-functions by viewing them as a quotient of Iwasawa analytic functions, by the proof of Proposition~11 in \cite{RW2}.  When $\ell \ne 2,$ the vanishing of the $\mu$-invariant mentioned above means precisely that the coefficients of these power series have no nontrivial common divisor; and the Theorem~A property of $L$-functions is that then $L_{k,S}$ is in $\text{\rm Det}\left(K_1(\Lambda (G_\infty)_\bullet)\right)$ (see next section for
precise definitions).

When $\ell =2,$ we can still form $L_{k,S},$ but now its values at characters $\chi$ of degree $1$ have
numerators divisible by $2^{[k:\Qbar]},$ because of (4.8), (4.9) of \cite{R}.  Define
$$
\wt L_{k,S}(\chi) = 2^{-[k:\Qbar]\chi(1)}L_{k,S}(\chi)
$$
for all $2$-adic characters $\chi $ of $G_\infty ,$ so that the deflation and restriction property of Proposition~12 of \cite{RW2} are maintained.  Then the analogous coprimality condition on coefficients of numerator, denominator of the values $\wt L_{k,S}(\chi)$ will be referred to as vanishing of the $\wt \mu  $-invariant of $K_\infty  /k$:  the Theorem~A property we want to test is therefore \begin{conj}\label{conj2}
$$
\wt L_{k,S} \q\text{\rm is in} \q \text{\rm Det}\left(K_1\big(\Lambda (G_\infty)_\bullet)\right).
$$
\end{conj}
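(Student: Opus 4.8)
The statement being a conjecture, the aim of what follows is to reduce it, in the present setting, to the concrete Conjecture~\ref{conj1} --- and, along the way, to prove the assertion made above that $F_1(T)\in\Z_2[[T]]$. Both come from unwinding the membership condition in $\Det\big(K_1(\Lambda(G_\infty)_\bullet)\big)$ into explicit congruences among the values $\wt L_{k,S}(\chi)$, with $\chi$ running over the irreducible $2$-adic characters of $G_\infty$.

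First I would recall the Ritter--Weiss description (built on Oliver's integral logarithm) of $\Det\big(K_1(\Lambda(G_\infty)_\bullet)\big)$ as a subgroup of $\Hom^*\big(R_2(G_\infty),\mathcal{Q}^{\,c}(\Gamma_k)^\times\big)$: an equivariant homomorphism lies in it precisely when, on top of the evident Galois-invariance and integrality of its value at each character, it satisfies the family of congruences produced by the integral logarithm --- congruences tying the value at a character to its values at the characters obtained by Adams operations and by induction from subgroups, taken modulo suitable powers of the augmentation ideals of $\Z_2[[\Gamma_k]]$. Next I would specialise to $G:=G_\infty/\Gamma_k\cong\Gal(K/k)$, dihedral of order $8$, whose irreducible characters are the four of degree one (factoring through $G^{\mathrm{ab}}\cong(\Z/2)^2$) together with the single faithful degree-two character $\vartheta=\mathrm{Ind}_C^G\beta$. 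As the $\Gamma_k$-direction is pro-cyclic and abelian, the only congruence in the family that is not automatic is the one relating $\wt L_{k,S}(\vartheta)$ to the degree-one values; transporting $\vartheta$ and the linear characters back to ray class characters of $F$ through $\mathrm{Ind}_C^G$ and class field theory --- using that $1_C$, $\beta^2$ and $\beta$ induce respectively to $1+\psi$, $\psi'+\psi''$ and $\vartheta$ --- turns exactly the combination
\[ L_{F,S}(s,1)+L_{F,S}(s,\beta^2)-2L_{F,S}(s,\beta) \]
into the content of that single congruence, while the term $\rho_{F,S}\log(u)/\big(8(u^{1-s}-1)\big)$ enters as the correction removing the trivial pole of $L_{F,S}(s,1)$ at $s=1$ (near $s=1$, $u^{1-s}-1\sim-(s-1)\log u$, so it cancels the residue contribution $\rho_{F,S}/\big(8(s-1)\big)$), which is precisely what makes $f_1$, hence $F_1$, regular at $T=0$ and Iwasawa-analytic. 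Finally I would do the bookkeeping with the powers of $2$: reading off that the modulus in the surviving congruence is $4\Z_2[[\Gamma_k]]$ --- i.e.\ $4\Z_2[[T]]$ once $u$ is taken as topological generator --- and that integrality at the degree-one characters refines $\tfrac12\rho_{F,S}\in\Z_2$ to $\tfrac12\rho_{F,S}\in4\Z_2$; together these are exactly Conjecture~\ref{conj1}, so that Conjecture~\ref{conj2} becomes equivalent to it.

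The main obstacle is that the $\Det$/integral-logarithm machinery and the vanishing-$\mu$ formalism of \cite{RW2}, \cite{RW3} were built for odd $\ell$, where \cite{W} applies, whereas at $\ell=2$ one must work instead through the Iwasawa-analytic framework of \cite{R} and, above all, through the ``extra'' $2$-power divisibilities of \cite{DR}; it is these that force the normalisation $\wt L_{k,S}(\chi)=2^{-[k:\Qbar]\chi(1)}L_{k,S}(\chi)$, the unique twist (by design) keeping the deflation and restriction relations of Proposition~12 of \cite{RW2} intact. The real crux is then to control the integral logarithm at $\ell=2$ --- its image modulo the $2$-torsion of $K_1$ of the localised Iwasawa order --- precisely enough to show that the surviving dihedral congruence is genuinely modulo $4$, and not some other power of $2$, since that exact power is the whole content of Conjecture~\ref{conj1}. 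Once that is pinned down, what remains is the careful transport of the representation theory of the dihedral group of order $8$ through induction, inflation and class field theory, together with the elementary fact that $\log u\in4\Z_2^\times$ whenever $u\equiv5\bmod 8$.
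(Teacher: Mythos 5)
Your proposal correctly identifies the goal --- to reduce Conjecture~\ref{conj2}, in the dihedral setting of \S1, to Conjecture~\ref{conj1} --- and the character-theoretic bookkeeping you sketch ($\ind_C^G 1_C = 1+\eta$, $\ind_C^G\beta^2 = \nu+\eta\nu$, $\ind_C^G\beta = \alpha$, with the pole of $L_{F,S}(s,1)$ cancelled by the $\rho_{F,S}\log(u)/(8(u^{1-s}-1))$ term) matches the paper's Section~\ref{sec:rewriting}. But the route you take to characterize $\Det\big(K_1(\Lambda(G_\infty)_\bullet)\big)$ --- Oliver's integral logarithm and the associated Adams-operation congruence family --- is not what the paper does, and you yourself flag that at $\ell=2$ the integral-logarithm approach runs into $2$-torsion in $K_1$ and that ``pinning down'' the modulus $4$ is the real crux. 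You never actually pin it down.

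The paper's approach sidesteps the integral logarithm entirely. It writes $\Lambda(G_\infty)_\bullet = \Lambda(\Gamma)_\bullet[H]$ and sits the order in a pullback square whose off-diagonal corner is the crossed product $\mathfrak A = (\Lambda(\Gamma)_\bullet(\zeta_4))*\langle s\rangle$. Using $\wt\mu=0$ (Ferrero--Washington) and the proof of Theorem~9 of \cite{RW3} to produce a unit $\wt\Theta^{\,ab}\in\Lambda(G_\infty^{ab})_\bullet^\times$ with $\Det(\wt\Theta^{\,ab}) = \wt L_{K_\infty^{ab}/\Qbar}$, the membership condition becomes: for any lift $y\in\mathfrak A$ of $\wt\Theta^{\,ab}\bmod 2$, one needs $nr(y)\equiv\wt L_{K_\infty/\Qbar}(\alpha)\bmod 4\Lambda(\Gamma_\Qbar)_\bullet$. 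The modulus $4$ then comes out of a two-line elementary computation: for $x = a+b\zeta_4+c\wt s+d\zeta_4\wt s$ one has $nr(x)=(a^2+b^2)-(c^2+d^2)$, from which $nr(1+2\mathfrak A)=1+4\Lambda(\Gamma)_\bullet$ (the inclusion $\supseteq$ coming from $nr\big((1+2a)+2a\wt s\big)=1+4a$). This \emph{Claim} is precisely the step your proposal leaves open. So the gap in your argument is real and is exactly the point you label as ``the real crux'': without the explicit reduced-norm computation (or some substitute for it), the assertion that the surviving congruence is modulo $4$ rather than some other power of $2$ is unsupported, and with it the equivalence to Conjecture~\ref{conj1} collapses. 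The paper's pullback/reduced-norm route is both more elementary and, at $\ell=2$, more robust than the integral-logarithm route you gesture at; it is worth internalizing, since it is what makes the reduction provable rather than merely plausible.
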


\begin{remark} a) When Conjecture 2 holds, then $\wt L_{_{k,S}}(\chi)$ is in $\Lambda ^c(\Gamma  _k)^\times _\bullet$ for all $\chi  \in R_2(G_\infty  ),$ implying the vanishing of the $\wt\mu$-invariant of $K_\infty  /k\,.$

b) For $\ell \ne 2,$ some cases of the equivariant ``main conjecture'' have recently been proved (\cite{RW}).
\end{remark}

\section{Interpreting Conjecture~\ref{conj2} as a congruence}

We now specialize to the situation of \S1, so use the notation of its first paragraph, in order to exhibit a congruence equivalent to Conjecture~\ref{conj2} (see Figure~1).

\begin{figure}[htp]
\begin{center}
    \begin{pgfpicture}{-.5cm}{-.3cm}{4cm}{4.3cm}
     \pgfsetxvec{\pgfxy(.6,0)}
    \pgfsetyvec{\pgfxy(0,.8)}
   \pgfnodebox{Q}[virtual]{\pgfxy(2,0)}{$\Qbar$}{2pt}{2pt} 
   \pgfnodebox{K}[virtual]{\pgfxy(0,2.5)}{$K$}{2pt}{2pt} 
   \pgfnodebox{Kinf}[virtual]{\pgfxy(4,5)}{$K_\infty$}{2pt}{2pt} 
   \pgfnodebox{Qinf}[virtual]{\pgfxy(6,2.5)}{$\Qbar_\infty$}{2pt}{2pt} 
   \pgfnodeconnline{Q}{K}
   \pgfnodeconnline{K}{Kinf}
   \pgfnodeconnline{Q}{Qinf}
   \pgfnodeconnline{Qinf}{Kinf}
   \pgfnodeconnline{Kinf}{Q}
   \pgfnodelabel{Q}{K}[0.5][8pt]{\pgfbox[center,top]{$G$}}
   \pgfnodelabel{K}{Kinf}[0.5][8pt]{\pgfbox[center,base]{$\Gamma$}}
   \pgfnodelabel{Kinf}{Qinf}[0.5][8pt]{\pgfbox[center,base]{$H$}}
   \pgfnodelabel{Qinf}{Q}[0.5][8pt]{\pgfbox[center,top]{$\Gamma_\Qbar$}}
   \pgfnodelabel{Q}{Kinf}[0.6][8pt]{\pgfbox[right,top]{$G_\infty$}}
  \end{pgfpicture}
\end{center}
\caption{}
\end{figure}

Let $\Qbar_\infty $ be the cyclotomic $\Z_2$-extension of $\Qbar,$ i.e. the maximal totally real subfield of the field obtained from $\Qbar$ by adjoining all $2$-power roots of unity, and set $\Gamma _{\Qbar} = \;\Gal\,(\Qbar_\infty /\Qbar) \simeq \Z_2.$ Let $K_\infty = K\Qbar_\infty ,$ noting that $K\cap \Qbar_\infty =\Qbar$ follows from $\sqrt 2\notin K,$ and set $G_\infty =\;\Gal(K_\infty /\Qbar)$.  Defining $\Gamma = \,\ker(G_\infty \to G)$, $H=\,\ker\,(G_\infty \to \Gamma _\Qbar)$, we now have $H\hookrightarrow G_\infty \twoheadrightarrow \Gamma_\Qbar$ in the notation of \cite{RW2}.

Since $G_\infty =\Gamma \times H$ with $\Gamma \simeq \Gamma _\Qbar$ and $H\simeq G$ dihedral of order $8$ we can understand the structure of
$$
\Lambda    (G_\infty)_\bullet = \Lambda (\Gamma)_\bullet \otimes_{\Z_2} \Z_2[H] = \Lambda  (\Gamma )_\bullet [H]
$$
where $\bullet$ means ``invert all elements of $\Lambda (\Gamma )\bs 2\Lambda (\Gamma).$''

\smallskip

Namely, choose $\sigma,\tau$ in $G$ so that $C=\la \tau \ra$ with $\sigma^2=1,$ $\sigma \tau \sigma ^{-1}=\tau ^{-1}$ and extend them to $K_\infty, $ with  trivial action on $\Qbar_\infty, $ to get $s,t$ respectively. Then the abelianization of $H$ is $H^{ab} = H/\la t^2\ra$ and we get a pullback diagram
$$
\begin{CD}
\Lambda(G_\infty)_\bullet = \Lambda(\Gamma)_\bullet [H] @>>> \left(\Lambda(\Gamma)_\bullet (\zeta_4)\right)*\la s\ra 
\\
@VVV @VVV
\\
\Gamma(G^{ab}_\infty)_\bullet = \Lambda  (\Gamma)_\bullet[H^{ab}] @>>> \Lambda  (\Gamma)_\bullet[H^{ab}]/2\Lambda  (\Gamma)_\bullet [H^{ab}] 
\end{CD}
$$
where the upper right term is the crossed product order with $\Lambda(\Gamma )_\bullet$-basis $1,\zeta_4,\wt s,\zeta_4\wt s$ with $\zeta^2_4 = -1,$\; $\wt s\,^2=1,$\; $\wt s\zeta_4 = \zeta^{-1}_4\wt s = -\zeta_4\wt s$ and the top map takes $t,s$ to $\zeta_4,\wt s$ respectively, the right map takes $\zeta_4,\wt s$ to $t^{ab},s^{ab}.$ This diagram originates in the pullback diagram for the cyclic group $\la t\ra$ of order $4,$ then going to the dihedral group ring $\Z_2[H]$ by incorporating the action of $s,$ and finally applying $\Lambda (\Gamma)_\bullet\otimes_{\Z_2}-.$

We now turn to getting the first version of our congruence in terms of the pullback diagram above.  This is possible since $R^\times \to K_1(R)$ is surjective for all the rings considered there.  We also simplify notation a little by setting ${\frak A} = \left(\Lambda(\Gamma)_\bullet (\zeta_4)\right)*\la s\ra$ and writing $\wt L_{k,S}$ as $\wt L_{K_\infty /k}\,,$ because we will now have to vary the fields and $S$ is fixed anyway.  The dihedral group $G$ has $4$ degree $1$ irreducible characters $1,\eta ,\nu,\eta \nu$ with $\eta (\tau )=1,$\; $\nu(\sigma )=1$ and a unique degree $2$ irreducible $\alpha ,$ which we view as characters of $G_\infty $ by inflation.

\begin{proposition}\label{Prop1}
Let $K^{ab}_\infty  $ be the fixed field of $\la t^2\ra,$ hence $\Gal(K^{ab}_\infty  /\Qbar ) = G^{ab}_\infty$.  Then
\begin{enumerate}
\item[\rm a)] $ \wt L_{K^{ab}_\infty  /\Qbar}=\,\Det(\wt \Theta^{\,ab})$ for some $\wt\Theta^{\,ab} \in \Lambda (G^{ab}_\infty)^\times_\bullet$ 
\item[\rm b)] $\wt L_{K_\infty  /\Qbar} \in \Det(K_1\left(\Lambda(G_\infty)_\bullet\right))$ if, and only if, any $y \in {\frak A}$ mapping to $\wt\Theta^{\,ab} \mod 2$ in $\Lambda(G^{ab}_\infty)_\bullet/2\Lambda (G^{ab}_\infty)_\bullet$ has
$$
nr(y) \equiv \wt L_{K_\infty  /\Qbar}(\alpha) \mod 4\Lambda(\Gamma_\Qbar)_\bullet 
$$
\end{enumerate}
where $nr$ is the reduced norm of \textup{(}the total ring of fractions of\textup{)} ${\frak A}$ to its centre $\Lambda (\Gamma)_\bullet$ and we identify $\Lambda (\Gamma )_\bullet$ with $\Lambda(\Gamma_{\Qbar})_\bullet$ via $\Gamma \overset{\simeq}\rightarrow \Gamma_\Qbar.$
\end{proposition}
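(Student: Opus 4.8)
For part (a) the point is that $K^{ab}_\infty/\Qbar$ is abelian: since $G^{ab}_\infty=\Gamma\times H^{ab}$ with $H^{ab}\simeq(\Z/2)^2$, the value of $\wt L_{K^{ab}_\infty/\Qbar}$ at a degree one character $\chi$ is $\frac12$ times the $S$-truncated $2$-adic $L$-function of $\chi$. By the integrality recalled in \S1 together with the extra $2$-divisibility of \cite{DR}, each such value already lies in $\Lambda(\Gamma)$, and by the Ferrero--Washington theorem ($\mu=0$ for abelian fields, at every prime) it is a unit of $\Lambda(\Gamma)_\bullet$; that is, the $\wt\mu$-invariant of $K^{ab}_\infty/\Qbar$ vanishes. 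The Deligne--Ribet congruences among these values are precisely what is needed to assemble the tuple $(\wt L_{K^{ab}_\infty/\Qbar}(\chi))_\chi$ into a single element $\wt\Theta^{\,ab}$ of the commutative ring $\Lambda(G^{ab}_\infty)_\bullet=\Lambda(\Gamma)_\bullet[H^{ab}]$ with those character values; it is a unit because each of its character values is, using that the evaluation-at-characters map is injective on $\Lambda(G^{ab}_\infty)_\bullet$ (the character matrix of $(\Z/2)^2$ has determinant $\pm4\neq0$ in the characteristic-zero domain $\Lambda(\Gamma)_\bullet$). Finally $\Det(\wt\Theta^{\,ab})=\wt L_{K^{ab}_\infty/\Qbar}$, since for an abelian group ring $\Det$ of the class of a unit is that same evaluation map.

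For part (b), write $F=\mathrm{Frac}(\Lambda(\Gamma)_\bullet)$, and note $\Lambda(\Gamma)_\bullet$ is a discrete valuation ring with uniformiser $2$ and residue field $\mathbb{F}_2((T))$. The displayed diagram is a Milnor square (its lower horizontal map is surjective), so Mayer--Vietoris yields a surjection of $K_1(\Lambda(G_\infty)_\bullet)$ onto $K_1({\frak A})\times_{K_1(\Lambda(G^{ab}_\infty)_\bullet/2)}K_1(\Lambda(G^{ab}_\infty)_\bullet)$, and since $R^\times\to K_1(R)$ is surjective for all of these rings we may represent classes by units. The dictionary I would install is: $\Det(x)$ restricted to the degree one characters $1,\eta,\nu,\eta\nu$ is computed from the image of $x$ under $\Lambda(G_\infty)_\bullet\to\Lambda(G^{ab}_\infty)_\bullet$, while $\Det(x)(\alpha)$ is the reduced norm $nr$ of the image of $x$ under $\Lambda(G_\infty)_\bullet\to{\frak A}$ --- here ${\frak A}$ is an order in $M_2(F)$, the crossed product being split because $1$ is a norm from $F(\zeta_4)$, and $\alpha$ has Schur index $1$. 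A $\mathrm{Hom}^*$-function on $R_2(G_\infty)$ is determined by its values at the irreducibles $1,\eta,\nu,\eta\nu,\alpha$, and by the deflation property (Proposition~12 of \cite{RW2}, applied to $K_\infty\twoheadrightarrow K^{ab}_\infty$) the first four values of $\wt L_{K_\infty/\Qbar}$ are those of $\wt L_{K^{ab}_\infty/\Qbar}=\Det(\wt\Theta^{\,ab})$; by the injectivity used in (a) the abelian part of any $x$ with $\Det(x)=\wt L_{K_\infty/\Qbar}$ is therefore forced to be $\wt\Theta^{\,ab}$. Hence $\wt L_{K_\infty/\Qbar}\in\Det\bigl(K_1(\Lambda(G_\infty)_\bullet)\bigr)$ if and only if there is $y\in{\frak A}^\times$ with $y\equiv\wt\Theta^{\,ab}\bmod 2$ in $\Lambda(G^{ab}_\infty)_\bullet/2\Lambda(G^{ab}_\infty)_\bullet$ and $nr(y)=\wt L_{K_\infty/\Qbar}(\alpha)$. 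Pinning down this dictionary --- the precise relation between $\Det$, the reduced norm on the crossed product order, and the Milnor patching, with all the $\wt{\ }$-normalisations kept straight --- is where I expect the main difficulty of the argument to lie.

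It then remains to convert this exact equality into the congruence in the statement. Reducing mod $2$, the map ${\frak A}\to\Lambda(G^{ab}_\infty)_\bullet/2\Lambda(G^{ab}_\infty)_\bullet$ induces an isomorphism ${\frak A}/2{\frak A}\cong\Lambda(G^{ab}_\infty)_\bullet/2\Lambda(G^{ab}_\infty)_\bullet\cong\mathbb{F}_2((T))[H^{ab}]$ (the relations $\zeta_4^2=-1$, $\wt s^2=1$, $\wt s\zeta_4=-\zeta_4\wt s$ collapse to those of $H^{ab}$ mod $2$); in particular ${\frak A}$ is local with $2{\frak A}\subseteq\mathrm{rad}({\frak A})$, so every lift $y\in{\frak A}$ of $\wt\Theta^{\,ab}\bmod 2$ is automatically a unit and any two differ by an element of $2{\frak A}$. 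Computing in $M_2(F)$, $nr(y+2z)=nr(y)\bigl(1+2\,\mathrm{tr}(y^{-1}z)+4\,nr(y^{-1}z)\bigr)$, and the reduced trace ${\frak A}\to\Lambda(\Gamma)_\bullet$ has image $2\Lambda(\Gamma)_\bullet$ (its values on the basis $1,\zeta_4,\wt s,\zeta_4\wt s$ are $2,0,0,0$), so $nr(y+2z)\equiv nr(y)\bmod 4\Lambda(\Gamma)_\bullet$. Thus $nr(y)\bmod 4\Lambda(\Gamma)_\bullet$ depends only on $\wt\Theta^{\,ab}\bmod 2$, which justifies the ``any $y$'' and, together with the ``only if'' direction above, settles that implication. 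Conversely, if some lift $y_0$ has $nr(y_0)\equiv\wt L_{K_\infty/\Qbar}(\alpha)\bmod 4\Lambda(\Gamma)_\bullet$, then $\wt L_{K_\infty/\Qbar}(\alpha)\in\Lambda(\Gamma)$ by the construction of $\wt L$ and, since $nr(y_0)$ is a unit of $\Lambda(\Gamma)_\bullet$, so is $\wt L_{K_\infty/\Qbar}(\alpha)$; I would then produce $v\in 1+2{\frak A}$ with $nr(y_0v)=\wt L_{K_\infty/\Qbar}(\alpha)$ by writing $v=1+2(c+d\zeta_4)$, so that $nr(v)=1+4(c+c^2+d^2)$, and solving $c+c^2+d^2=\mu$ in $\Lambda(\Gamma)_\bullet$ with $\mu=\frac14\bigl(\wt L_{K_\infty/\Qbar}(\alpha)\,nr(y_0)^{-1}-1\bigr)$, which is possible by Hensel's lemma (take $d=\mu$ and solve $c^2+c=\mu-\mu^2$: the value at $c=\mu$ is $2\mu^2\in 2\Lambda(\Gamma)_\bullet$ while the derivative $2\mu+1$ is a unit). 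Then $y:=y_0v$ is again a lift of $\wt\Theta^{\,ab}\bmod 2$ with $nr(y)=\wt L_{K_\infty/\Qbar}(\alpha)$, and the proof is complete.
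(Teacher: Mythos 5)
Your overall architecture for b) matches the paper's: reduce, via the pullback square and unit representatives, to the existence of a lift $y$ of $\wt\Theta^{\,ab}\bmod 2$ with $nr(y)$ \emph{exactly} equal to $\wt L_{K_\infty/\Qbar}(\alpha)$, and note (via the reduced trace, equivalently the norm formula $nr(a+b\zeta_4+c\wt s+d\zeta_4\wt s)=(a^2+b^2)-(c^2+d^2)$) that $nr(y)\bmod 4\Lambda(\Gamma)_\bullet$ depends only on $y\bmod 2{\frak A}$. The genuine gap is your last step, producing $v\in 1+2{\frak A}$ with $nr(y_0v)=\wt L_{K_\infty/\Qbar}(\alpha)$. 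You restrict to $v=1+2(c+d\zeta_4)$, so $nr(v)=1+4(c+c^2+d^2)$, set $d=\mu$ and invoke Hensel's lemma to solve $c^2+c=\mu-\mu^2$. But $\Lambda(\Gamma)_\bullet=\Z_2[[T]]_{(2)}$ is a non-complete, non-Henselian discrete valuation ring, so an approximate root with unit derivative need not lift; and the argument must work for essentially arbitrary $\mu\in\Lambda(\Gamma)_\bullet$, since $y_0$ is an arbitrary lift. Concretely, for $\mu=T^{-1}$ (a unit of $\Lambda(\Gamma)_\bullet$) the equation $c^2+c=T^{-1}-T^{-2}$ has \emph{no} solution in $\Lambda(\Gamma)_\bullet$: a solution would make the discriminant $(T^2+4T-4)/T^2$ a square in $\mathrm{Frac}(\Z_2[[T]])$, which it is not, since $T^2+4T-4$ is a distinguished polynomial irreducible over $\Qbar_2$ (its roots are $-2\pm2\sqrt2$), hence prime in $\Z_2[[T]]$, and it occurs to the first power --- even though mod $2$ the equation does split, because $\wp(T^{-1})=T^{-2}+T^{-1}$; this is precisely a witness that the ring is not Henselian. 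The root of the trouble is that you only used the commutative part $\Lambda(\Gamma)_\bullet(\zeta_4)$, whose norm form is a sum of squares and is not surjective onto $1+4\Lambda(\Gamma)_\bullet$. The paper's Claim avoids all of this with the $\wt s$-component: $nr\big((1+2a)+2a\wt s\big)=(1+2a)^2-(2a)^2=1+4a$, which gives $nr(1+2{\frak A})=1+4\Lambda(\Gamma)_\bullet$ by a one-line identity; substituting this for your Hensel step repairs the argument.

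Part a) also has a soft spot. Knowing that each value $\wt L_{K^{ab}_\infty/\Qbar}(\chi)=\tfrac12 L_{\Qbar,S}(\chi)$ is a unit of $\Lambda(\Gamma_\Qbar)_\bullet$ (note the value at $\chi=1$ lies only in $\Lambda(\Gamma)_\bullet$, not $\Lambda(\Gamma)$, because of the pole) does not yet produce an element of $\Lambda(\Gamma)_\bullet[H^{ab}]$ with those four values: solving for the coefficients introduces a factor $\tfrac14$, so one needs congruences modulo $4\Lambda(\Gamma)_\bullet$ among the four values --- equivalently, that \emph{half of Serre's pseudomeasure is integral} --- which is strictly more than $2$-divisibility character by character. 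You assert that ``the Deligne--Ribet congruences are precisely what is needed'' without stating or verifying them; the paper derives exactly this from Theorem~3.1b) and Theorem~4.1 of \cite{R} (giving $\lambda=2\wt\Theta^{\,ab}$), and then gets unit-ness from the Corollary to Theorem~9 of \cite{RW3}. (Also, injectivity of evaluation at characters does not by itself make $\wt\Theta^{\,ab}$ a unit; what does is that $\Lambda(G^{ab}_\infty)_\bullet$ is local, so it suffices that the value at the trivial character is a unit.) The remaining ingredients of your b) --- the identification of $\Det(\cdot)(\alpha)$ with the reduced norm in ${\frak A}$ and the reduction to the five characters of $G$ via type-$W$ twists --- are indeed exactly what Theorem~8 and Propositions~11, 12 of \cite{RW2} supply, as you anticipated.
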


\begin{proof}
a) The vanishing of $\wt\mu $ for $K_\infty  /\Qbar,$ in the sense of \S2, is known by \cite{FW}, i.e. $\wt L_{K^{ab}_\infty /\Qbar}(\chi )$ is a unit in $\Lambda(\Gamma _\Qbar)_\bullet$ for all $2$-adic characters $\chi $ of $G^{ab}_\infty$.  By the proof of Theorem~9 in \cite{RW3} we have $L_{K^{ab}_\infty /\Qbar}= \Det(\lambda )$ with $\lambda \in \Lambda(G^{ab}_\infty )_\bullet$ the pseudomeasure of Serre.  The point is then that $\lambda = 2\wt\Theta ^{\,ab}$ with $\wt\Theta^{\,ab} \in \Lambda(G^{ab}_\infty )_\bullet$, which follows from Theorem~3.1b) of \cite{R}, because of Theorem~4.1 (loc.cit.) and the relation between $\lambda $ and $\mu _c$ discussed just after it.  Then $\wt L_{K^{ab}_\infty /\Qbar}=\;\Det(\wt\Theta ^{\,ab})$ and now the proof of the Corollary to Theorem~9 in \cite{RW3} shows that $\wt\Theta^{\,ab}$ is a unit of $\Lambda (G^{ab}_\infty ).$

\smallskip

\noindent b) {\it Claim:} $nr(1+2{\frak A}) = 1+4\Lambda (\Gamma)_\bullet.$
\begin{proof}[Proof of the claim]
If $x=a1+b\zeta _4 + c\wt s + d\zeta _4\wt s$ with $a,b,c,d\in \Lambda (\Gamma )_\bullet$, one computes $nr(x) = (a^2+b^2) - (c^2+d^2)$ from which $nr(1+2{\frak A})\subseteq 1+4\Lambda (\Gamma)_\bullet;$ equality follows from $nr\big((1+2a) + 2a\wt s\big) = (1+2a)^2-(2a)^2 = 1+4a$ for $a\in \Lambda (\Gamma )_\bullet\,.$
\end{proof}
Suppose first that the congruence for $\wt L_{K_\infty /\Qbar}(\alpha)$ holds.  Start with $\wt\Theta ^{\,ab}$ from a) in the lower left corner of the pullback square and map it to $\wt\Theta ^{\,ab} \mod 2$ in the lower right corner. Choosing any $y_0\in {\frak A}$ mapping to $\wt\Theta ^{\,ab} \mod 2$, we note that $y_0\in {\frak A}^\times$ because the maps in the pullback diagram are ring homomorphisms and the kernel $2{\frak A}$ of the right one is contained in the radical of ${\frak A}.$ Thus $nr(y_0)\in \Lambda (\Gamma )^\times_\bullet$ has $nr(y_0)^{-1}\wt L_{K_\infty /\Qbar}(\alpha ) \in 1+4\Lambda (\Gamma_\Qbar)_\bullet$ by the congruence, hence, by the Claim,  $nr(y_0)^{-1}\wt L_{K_\infty/\Qbar} (\alpha ) = nr(z)$, $z\in 1+2{\frak A }$. So $y_1 = y_0z$ is another lift of $\wt\Theta ^{\,ab} \mod 2$ and $nr(y_1) = \wt L_{K_\infty /\Qbar}(\alpha ).$ By the pullback diagram we get $Y\in \Lambda (G_\infty )^\times_\bullet$ which maps to $\wt \Theta ^{\,ab}$ and $y_1,$ where $nr(y_1) = \wt L_{K_\infty/\Qbar}(\alpha ).$

It follows that $\Det\, Y =\wt L_{K_\infty /\Qbar}.$ To see this we check that their values agree at every irreducible character $\chi$ of $G_\infty$; it even suffices to check it on the characters $1,\eta ,\nu,\eta \nu,\alpha $ of $G$ by Theorem~8 and Proposition~11 of \cite{RW2}, because every irreducible character of $G_\infty $ is obtained from these by multiplying by a character of type $W.$ It works for the characters $1,\eta ,\nu,\eta \nu$ of $G^{ab}_\infty$ by Proposition~12, 1b) (loc.cit.) since defl $(Y) = \wt \Theta ^{\,ab}$ and $\Det\;\wt\Theta ^{\,ab} = \wt L_{K^{ab}_\infty /\Qbar}$ by a). Finally, $(\Det\;Y)(\alpha ) = j_\alpha \big(nr(Y)\big) = nr(y_1) =\wt L_{K_\infty /\Qbar}(\alpha )$ by the commutative triangle before Theorem~8 (loc.cit.), the definition of $j_\alpha$, and $G_\infty =\Gamma \times H.$

The converse depends on related ingredients.  More precisely, $\wt L_{K_\infty  /\Qbar} \in \;\Det\,K_1\big((\Lambda  G_\infty)_\bullet\big)$ implies $\wt L_{K_\infty  /\Qbar} = \,\Det\;Y$ with $Y \in (\Lambda  G_\infty  )^\times_\bullet$ by surjectivity of $(\Lambda  G_\infty  )^\times _\bullet \to K_1\big((\Lambda  G_\infty  )_\bullet\big).$  Since $(\Lambda G^{ab}_\infty  )^\times _\bullet \to K_1\big((\Lambda G^{ab}_\infty)_\bullet\big)$ is an isomorphism, we get defl $Y = \wt \Theta^{\,ab}$ in $\Lambda  (G^{ab}_\infty  )^\times.$ Letting $y_1\in {\frak A}^\times$ be the image of $Y$ in the pullback diagram, it follows that $nr(y_1) =\wt L_{K_\infty/\Qbar}(\alpha  )$ and that $y_1$ maps to $\wt\Theta^{\,ab}\mod 2$ in $\Lambda  (G^{ab}_\infty  )_\bullet/2\Lambda  (G^{ab}_\infty)_\bullet\,.$  Given any $y$ as in b), then $y^{-1}_1 y$ maps to $1$ hence is in $1+2{\frak A}$ and our congruence follows from the Claim on applying $nr.$
\end{proof}

\section{Rewriting the congruence in testable form}\label{sec:rewriting}
Set $F_0 = \dfrac{\wt L_{K_\infty  /F,S}(1) + \wt L_{K_\infty/F,S}(\beta  ^2)}{2} - \wt L_{K_\infty  /F,S}(\beta  )\,.$

\begin{proposition} \label{Prop2}
\begin{enumerate}
\item[\rm a)] $F_0$ is in $\Lambda  (\Gamma  _\Qbar)_\bullet$
\item[\rm b)] $\wt L_{K_\infty  /\Qbar} \in \;\Det\;K_1\big(\Lambda(G_\infty  )_\bullet\big)$ if, and only if, $F_0 \in 4\Lambda (\Gamma  _\Qbar)_\bullet$
\end{enumerate}
\end{proposition}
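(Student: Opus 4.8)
The plan is to combine Proposition~\ref{Prop1}b) with two explicit computations. By that proposition, $\wt L_{K_\infty/\Qbar}\in\Det K_1\big(\Lambda(G_\infty)_\bullet\big)$ holds if and only if $nr(y)\equiv\wt L_{K_\infty/\Qbar}(\alpha)\bmod 4\Lambda(\Gamma_\Qbar)_\bullet$ for some (equivalently, every) $y\in{\frak A}$ lifting $\wt\Theta^{\,ab}\bmod 2$. So I would first identify $\wt L_{K_\infty/\Qbar}(\alpha)$ with $\wt L_{K_\infty/F,S}(\beta)$, and then exhibit one particular lift $y_0$ for which $nr(y_0)=\tfrac12\big(\wt L_{K_\infty/F,S}(1)+\wt L_{K_\infty/F,S}(\beta^2)\big)$. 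Granting these, $F_0=nr(y_0)-\wt L_{K_\infty/\Qbar}(\alpha)$, and both parts follow at once: a) because $nr(y_0)$ and $\wt L_{K_\infty/\Qbar}(\alpha)$ each lie in $\Lambda(\Gamma_\Qbar)$, and b) because $F_0\in 4\Lambda(\Gamma_\Qbar)_\bullet$ is then literally the congruence of Proposition~\ref{Prop1}b) for the lift $y_0$.

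For the first identification, $\alpha=\mathrm{Ind}_C^G\beta$ (viewing $\beta$ as a character of $\Gal(K_\infty/F)$ by inflation), so Artin inductivity for the $S$-truncated $2$-adic $L$-functions gives $L_{\Qbar,S}(\alpha)=L_{F,S}(\beta)$; the two normalizing exponents coincide, $[\Qbar:\Qbar]\,\alpha(1)=2=[F:\Qbar]\,\beta(1)$, whence $\wt L_{K_\infty/\Qbar}(\alpha)=\wt L_{K_\infty/F,S}(\beta)$, and this lies in $\Lambda(\Gamma_\Qbar)$ by the $2$-power divisibility of the numerators proved in \cite{R}. The same formalism applied to $\mathrm{Ind}_C^G 1=1+\eta$ and $\mathrm{Ind}_C^G\beta^2=\nu+\eta\nu$ (virtual characters of the dihedral group $G$), together with $2^{-2}=2^{-1}\cdot 2^{-1}$, gives $\wt L_{K_\infty/F,S}(1)=\wt L_{K^{ab}_\infty/\Qbar}(1)\,\wt L_{K^{ab}_\infty/\Qbar}(\eta)$ and $\wt L_{K_\infty/F,S}(\beta^2)=\wt L_{K^{ab}_\infty/\Qbar}(\nu)\,\wt L_{K^{ab}_\infty/\Qbar}(\eta\nu)$, re-expressing the two $F$-values through the four abelian values over $\Qbar$.

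For the lift: by Proposition~\ref{Prop1}a), $\wt\Theta^{\,ab}\in\Lambda(G^{ab}_\infty)=\Lambda(\Gamma)[H^{ab}]$, so write $\wt\Theta^{\,ab}=\theta_1+\theta_s\,s^{ab}+\theta_t\,t^{ab}+\theta_{st}\,s^{ab}t^{ab}$ with all $\theta_\bullet\in\Lambda(\Gamma)$. Since the arrow ${\frak A}\to\Lambda(\Gamma)_\bullet[H^{ab}]/2\Lambda(\Gamma)_\bullet[H^{ab}]$ of the pullback square of the previous section sends $1,\zeta_4,\wt s,\zeta_4\wt s$ to $1,t^{ab},s^{ab},t^{ab}s^{ab}$, the element $y_0=\theta_1+\theta_t\,\zeta_4+\theta_s\,\wt s+\theta_{st}\,\zeta_4\wt s$ of ${\frak A}$ is a lift of $\wt\Theta^{\,ab}\bmod 2$, and $nr(y_0)=(\theta_1^2+\theta_t^2)-(\theta_s^2+\theta_{st}^2)$ by the Claim in the proof of Proposition~\ref{Prop1}. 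Now $\Det(\wt\Theta^{\,ab})(\psi)=\sum_{h\in H^{ab}}\theta_h\,\psi(h)=\wt L_{K^{ab}_\infty/\Qbar}(\psi)$ for $\psi\in\{1,\eta,\nu,\eta\nu\}$ (Proposition~\ref{Prop1}a); solving these four equations for $4\theta_1,4\theta_s,4\theta_t,4\theta_{st}$ (Fourier inversion over $H^{ab}\simeq(\Z/2\Z)^2$, using $\eta(\tau)=1$, $\nu(\sigma)=1$) and substituting yields, after a short computation,
\[
16\,nr(y_0)=8\Big(\wt L_{K^{ab}_\infty/\Qbar}(1)\,\wt L_{K^{ab}_\infty/\Qbar}(\eta)+\wt L_{K^{ab}_\infty/\Qbar}(\nu)\,\wt L_{K^{ab}_\infty/\Qbar}(\eta\nu)\Big),
\]
that is, $nr(y_0)=\tfrac12\big(\wt L_{K_\infty/F,S}(1)+\wt L_{K_\infty/F,S}(\beta^2)\big)$ by the previous paragraph. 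Hence $F_0=nr(y_0)-\wt L_{K_\infty/\Qbar}(\alpha)$; as $nr(y_0)=\theta_1^2+\theta_t^2-\theta_s^2-\theta_{st}^2\in\Lambda(\Gamma_\Qbar)$ and $\wt L_{K_\infty/\Qbar}(\alpha)\in\Lambda(\Gamma_\Qbar)$, part a) follows, and part b) is then immediate from Proposition~\ref{Prop1}b).

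The genuinely delicate point is the bookkeeping of normalizations: the $S$-truncation, the $\langle\,\rangle$-twist, and above all the factors $2^{-[k:\Qbar]\chi(1)}$ in the definition of $\wt L$. These have to cancel \emph{exactly}, not merely up to a power of $2$, for the identities $\wt L_{K_\infty/\Qbar}(\alpha)=\wt L_{K_\infty/F,S}(\beta)$ and $\wt L_{K_\infty/F,S}(1)=\wt L_{K^{ab}_\infty/\Qbar}(1)\,\wt L_{K^{ab}_\infty/\Qbar}(\eta)$ (and its analogue) to hold on the nose; otherwise the constant in the final congruence would be wrong. One must also check that $y_0$ really maps to $\wt\Theta^{\,ab}\bmod 2$, which is precisely where the explicit description of the top and right arrows of the pullback diagram of the previous section (the assignments $t\mapsto\zeta_4$, $s\mapsto\wt s$) enters. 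By contrast the dihedral character computations and the algebraic identity for $nr(y_0)$ are routine.
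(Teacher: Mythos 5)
Your proof follows essentially the same route as the paper's: express $\wt\Theta^{\,ab}$ in coordinates on $\Lambda(\Gamma)_\bullet[H^{ab}]$, lift it to $y_0 = a + b\zeta_4 + c\wt s + d\zeta_4\wt s \in {\frak A}$, compute $nr(y_0)$ via the formula from the Claim, use inductivity of the normalized $L$-functions to rewrite the result as $\tfrac12\bigl(\wt L_{K_\infty/F,S}(1)+\wt L_{K_\infty/F,S}(\beta^2)\bigr)$, and conclude $F_0 = nr(y_0) - \wt L_{K_\infty/\Qbar}(\alpha)$ so that both parts drop out of Proposition~\ref{Prop1}. The only differences are cosmetic (you perform Fourier inversion and substitute, whereas the paper factors $a^2-c^2$ and $b^2-d^2$ directly) plus a harmless imprecision at the end where you write $\Lambda(\Gamma_\Qbar)$ for $\Lambda(\Gamma_\Qbar)_\bullet$.
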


\begin{proof}
Note that $\ind^G_C \, 1_C = 1_G +\eta ,$ \; $\ind^G_C\, \beta^2=\nu+\eta \nu,$ \; $\ind^G_C\,\beta =\alpha .$ When we inflate $\beta $ to a character of $\Gal(K_\infty /F)$ then $\ind^{G_\infty}_{\Gal(K_\infty /F)} \beta =\alpha $ with $\alpha$ inflated to $G_\infty ,$ etc.

By  Proposition 3.1 of the previous section we can write $\wt L_{K^{ab}_\infty /\Qbar} = \;\Det\,(\wt\Theta^{\,ab})$ with
$$
\wt\Theta^{\,ab} = a+bt^{ab} + cs^{ab} + ds^{ab}t^{ab}
$$
for some $a,b,c,d$  in $\Lambda (\Gamma )_\bullet\,.$ It follows that
$$
\begin{aligned}
\wt L_{K_\infty  /\Qbar}(1) &= a+b +c+d\\
\wt L_{K_\infty  /\Qbar}(\eta  ) &= a+b -c-d\\
\wt L_{K_\infty  /\Qbar}(\nu) &= a-b +c-d\\
\wt L_{K_\infty  /\Qbar}(\eta  \nu) &= a-b -c+d.
\end{aligned}
$$
  
Form $y = a+b\zeta _4 + c\wt s + d\zeta _4\wt s$ in $\big(\Lambda (\Gamma )_\bullet (\zeta _4)\big)*\la s\ra\,.$ By the computation in the Claim in the proof of Proposition~\ref{Prop1}, we have
$$
\begin{aligned}
nr(y) &= (a+c)(a-c) + (b+d)(b-d) \\
&= \frac{\wt L_\Qbar(1) +\wt L_{\Qbar}(\nu)}{2} \; \frac{\wt L_\Qbar(\eta  ) +\wt L_\Qbar(\eta  \nu)}{2} +\frac{\wt L_\Qbar(1) - \wt L_\Qbar(\nu)}{2} \; \frac{\wt L_{\Qbar}(\eta  ) - \wt L_\Qbar(\eta\nu)}{2} \\
&= \frac 14\,\big(\wt L_\Qbar(1+\eta  ) +\wt L_\Qbar (1+\eta  \nu)+\wt L_\Qbar (\nu+\eta) +\wt L_\Qbar (\nu + \eta  \nu)\big) \\
& \q +\frac 14 \big(\wt L_\Qbar(1+\eta  ) - \wt L_\Qbar(1+\eta  \nu) - \wt L_\Qbar (\nu+\eta  ) + \wt L_\Qbar (\nu+\eta  \nu)\big) \\
& = \frac{\wt L _\Qbar (1+\eta  ) +\wt L_\Qbar (\nu+\eta  \nu)}{2} = \frac{\wt L_F (1) + \wt L_F(\beta  ^2)}{2}\;, 
\end{aligned}
$$
because
$$
\wt L_{K_\infty  /\Qbar}(\ind^{G_\infty  }_{_{\Gal(K_\infty  /F)}}\chi) = \wt L_{K_\infty  /F}(\chi)
$$
for all characters $\chi  $ of $\Gal(K_\infty  /F).$  Thus also $\wt L_{K_\infty  /\Qbar}(\alpha  ) = \wt L_{K_\infty  /F}(\beta  )$, so we now have shown that
$$
F_0 = nr(y) - L_{K_\infty  /\Qbar}(\alpha)
$$
proving a), since $\wt L_{K_\infty  /F}(\beta  )\in (\Lambda \Gamma_F)_\bullet$ by \S2, as $\beta  $ has degree $1.$

Moreover, the image of $y$ under the right arrow of the pullback diagram of \S3 equals $\wt\Theta^{\,ab}\mod 2,$ by construction, hence b) follows directly from Proposition 3.1b).
\end{proof}

\begin{remark}
Considering $F_0$ in $\Lambda  (\Gamma  _\Qbar)_\bullet,$ instead of its natural home $\Lambda  (\Gamma  _F)_\bullet\,,$ is done to be consistent with the identification in b) of Proposition~\ref{Prop1}, via the natural isomorphisms $\Gamma  {\to} \Gamma  _F{\to} \Gamma  _\Qbar$: this is the sense in which $L_{K_\infty/\Qbar}(\alpha  ) = L_{K_\infty/F}(\beta).$ 
\end{remark}

The congruence $F_0\equiv 0 \mod 4\Lambda (\Gamma _\Qbar)_\bullet$ can now be put in the more testable form of Conjecture 1.  Let $\gamma _\Qbar$ be the generator of $\Gamma _\Qbar$ which, when extended to $\Qbar(\sqrt{-1}\,)$ as the identity, acts on all $2$-power roots of unity in $\Qbar_\infty(\sqrt{-1}\,)$ by raising them to the $u^{\text{\rm th}}$ power, where $u\equiv 5 \mod 8\Z_2$ as fixed before.  Then the Iwasawa isomorphism $\Lambda (\Gamma _\Qbar)\simeq \Z_2\big[[T]\big],$ under which $\gamma_\Qbar -1$ corresponds to $T,$ makes $ F_0\in \Lambda (\Gamma_\Qbar)_\bullet$ correspond to some $F_0(T) \in \Z_1\big[[T]\big]_\bullet$ and the congruence of Proposition 4.1b) to
$$
F_0 (T) \equiv 0 \mod 4\Z_2\big[[T]\big]_\bullet
$$
Since $\beta $ is an abelian character, we know that $\wt L_{F,S}(\beta ^2),$\, $\wt L_{F,S}(\beta )$ correspond to elements of $\Z_2\big[[T]\big],$ not just $\Z_2\big[[T]\big]_\bullet$ (cf.  \S4 of \cite {RW2}), and $\wt L_F(1)$ to one of $T^{-1}\Z_2\big[[T]\big]$. We thus have
$$
F_0(T) =\frac{x_0}{T} +\sum^\infty  _{j=1} x_j T^{j-1}
$$
with $x_j\in \Z_2$ for all $j\ge 0.$

By the interpolation definition of $\big(\wt L_{F,S}(\beta^i)\big)(T)$ (cf \S4 of \cite{R}), it follows that
$$
F_0(u^s-1) =\frac 12\,\Big(\frac{L_{F,S}(1-s,1)}{4}+\frac{L_{F,S}(1-s,\beta ^2)}{4} - 2\;\frac{L_{F,S}(1-s,\beta)}{4}\Big) = f_0(1-s)\,.
$$
This implies
$$
x_0 = \,-\;\frac{\rho_{F,S}\log(u)}{8}\;,
$$
because the left side  is 
$$
\us{T\to 0}\lim\; TF_0(T) = \us{s\to 1}\lim\;\frac{u^{1-s}-1}{s-1}\; (s-1)f_0(s) = -\log(u)\,\us{s\to 1}\lim \,(s-1)\;\frac{L_{F,S}(s,1)}{8}
$$
as required. Note that $u\equiv 5 \mod 8$ implies that $\dfrac{\log(u)}{4}$ is a $2$-adic unit, hence $\frac 12\,\rho_{F,S} \in \Z_2$ is in $4\Z_2$ if, and only if, $x_0 \in 4\Z_2.$

\smallskip

Define $F_1(T) = F_0 (T) - x_0 T^{-1} =\os\infty {\us{j=1}\sum} x_jT^{j-1}\in \Z_2\big[[T]\big].$ It follows that
$$
F_1(u^{s}-1) = -\,\frac{x_0}{u^{s}-1} + F_0(u^{s}-1) = \frac{\rho_{F,S}\log(u)}{8(u^{s}-1)} + f_0(1-s)
$$
which is $f_1(1-s), $ with $f_1$ as in \S1, thus reconciling the notation $F_1(T)$ here with that there.  Thus Conjecture~\ref{conj1} of \S1 is equivalent to Conjecture~\ref{conj2} of \S2 for the special case $K_\infty /\Qbar$ of \S1.

\section{Testing Conjecture~\ref{conj1}}

Let $\chi$ be a $2$-adic character of the Galois group $C$ of $K/F$ and let $\mathfrak{f}$ be the conductor of $K/F$. By class field theory, we view $\chi$ as a map on the group of ideals relatively prime to $\mathfrak{f}$. Fix a prime ideal $\mathfrak{c}$ not dividing $\mathfrak{f}.$ For $\mathfrak{a}$, a fractional ideal relatively prime to $\mathfrak{c}$
and $\mathfrak{f}$, let $\mathcal{Z}_{\mathfrak{f}}(\mathfrak{a}, \mathfrak{c}; s)$ denote the associated $2$-adic twisted partial zeta function [PCN]. Thus, we have
\begin{equation*}
L_{F,S}(s, \chi) = \frac{1}{\chi(\mathfrak{c}) \langle N\mathfrak{c} \rangle^{1-s} - 1} \prod_{\mathfrak{p}} \left(1 - \frac{\chi(\mathfrak{p})}{N\mathfrak{p}} \langle N\mathfrak{p}\rangle^{1-s}\right) \sum_{\sigma \in G} \chi(\sigma)^{-1} \mathcal{Z}_\mathfrak{f}(\mathfrak{a}_\sigma^{-1}, \mathfrak{c}; s)   
\end{equation*}
where $\mathfrak{p}$ runs through the prime ideals of $F$ in $S$ not dividing $2\mathfrak{f}$, $\mathfrak{a}_\sigma$ is a (fixed) integral ideal coprime with $2\mathfrak{fc}$ whose Artin symbol is $\sigma$

\smallskip

Denote the ring of integers of $F$ by $\mathcal{O}_F$ and let $\gamma \in \mathcal{O}_F$ be such that $\mathcal{O}_F = \mathbb{Z} + \gamma \mathbb{Z}$. In [Rob] (see also [BBJR] for a slightly different presentation), it is shown that the function $\mathcal{Z}_{\mathfrak{f}}(\mathfrak{a}, \mathfrak{c}; s)$ is defined by the following integral
$$
\mathcal{Z}_\mathfrak{f}(\mathfrak{a}, \mathfrak{c}; s) = \int
\frac{\langle N\mathfrak{a} \, N(x_1 + x_2\gamma)\rangle^{1-s}}{N\mathfrak{a} \, N(x_1 + x_2\gamma)} \,
d\mu_{\mathfrak{a}}(x_1, x_2)
$$
where the integration domain is $\mathbb{Z}_2^2$, $\langle\,\rangle$ is extended to $\mathbb{Z}_2$ by $\langle x\rangle = 0$ if $x \in 2\mathbb{Z}_2$, and the measure $\mu_{\mathfrak{a}}$ is a measure of norm $1$ (depending also on $\gamma$, $\mathfrak{f}$ and $\mathfrak{c}$). 

\smallskip

Assume now, as we can do without loss of generality, that the ideal $\mathfrak{c}$ is such that $\langle N\mathfrak{c} \rangle \equiv 5 \pmod{8\mathbb{Z}_2}$ and take $u = \langle N\mathfrak{c}\rangle$. For $s \in \mathbb{Z}_2$, we let $t = t(s) = u^s - 1 \in 4\mathbb{Z}_2$, so that $s = \log(1+t)/\log(u)$. For $x \in \mathbb{Z}_2^\times$, one can check readily that
$$
\langle x \rangle^s = \left(u^{\mathcal{L}(x)}\right)^s = (1 + u^s - 1)^{\mathcal{L}(x)} = \sum_{n \geq 0} \binom{\mathcal{L}(x)}{n} t^n 
$$
where $\mathcal{L}(x) = \log\langle x\rangle/\log u \in \mathbb{Z}_2$. For $x \in \mathbb{Z}_2^\times$, we set
$$
L(x;T) = \sum_{n \geq 0} \binom{\mathcal{L}(x)}{n} T^n \in \mathbb{Z}_2[[T]]
$$
and $L(x; T) = 0$ if $x \in 2\mathbb{Z}_2$. Now, we define
\begin{align*}
R(\mathfrak{a}, \mathfrak{c}; T) & = \int \frac{ L\big(N\mathfrak{a} \, N(x_1 + x_2\gamma); T\big) }{N\mathfrak{a} \, N(x_1 + x_2\gamma)}\, d\mu_{\mathfrak{a}}(x_1,x_2) \in \mathbb{Z}_2[[T]] \\[5pt]
B(\chi; T) & = \chi(\mathfrak{c})(T + 1) - 1 \in \mathbb{Z}_2[\chi][T], \\[5pt] 
A(\chi; T) & = \prod_{\mathfrak{p}} \left(1 - \frac{\chi(\mathfrak{p})}{N\mathfrak{p}} L(N\mathfrak{p}; T)\right) \sum_{\sigma \in G} \chi(\sigma)^{-1} R(\mathfrak{a}_\sigma^{-1}, \mathfrak{c}; T) \in \mathbb{Z}_2[\chi][[T]]
\end{align*}
where $\mathfrak{p}$ runs through the prime ideals of $F$ in $S$ not dividing $2\mathfrak{f}$.

\begin{proposition}\label{interpol}
We have, for all $s \in \mathbb{Z}_2$
\begin{equation*}
L_{F,S}(1-s, \chi) = \frac{A(\chi; u^s-1)}{B(\chi; u^s-1)}.
\end{equation*}
\end{proposition}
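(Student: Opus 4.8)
The plan is to obtain the identity by substituting $s\mapsto 1-s$ in the formula for $L_{F,S}(s,\chi)$ recalled just above, and then recognizing each of the resulting factors as the value at $T=u^s-1$ of the power series $B(\chi;T)$, of the Euler product appearing in $A(\chi;T)$, and of $R(\mathfrak{a}_\sigma^{-1},\mathfrak{c};T)$ respectively. The only analytic input is an interchange of the integral defining the twisted partial zeta function with the specialization $T\mapsto u^s-1$.

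First I would replace $s$ by $1-s$ throughout
$$
L_{F,S}(s,\chi)=\frac{1}{\chi(\mathfrak{c})\langle N\mathfrak{c}\rangle^{1-s}-1}\prod_{\mathfrak{p}}\Bigl(1-\frac{\chi(\mathfrak{p})}{N\mathfrak{p}}\langle N\mathfrak{p}\rangle^{1-s}\Bigr)\sum_{\sigma\in G}\chi(\sigma)^{-1}\mathcal{Z}_{\mathfrak{f}}(\mathfrak{a}_\sigma^{-1},\mathfrak{c};s),
$$
so that every exponent $1-s$ — including the one inside the integrand of $\mathcal{Z}_{\mathfrak{f}}(\mathfrak{a}_\sigma^{-1},\mathfrak{c};1-s)$ — becomes $s$. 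Next I would use the interpolation identity already noted in the text, $\langle x\rangle^s=\sum_{n\ge0}\binom{\mathcal{L}(x)}{n}(u^s-1)^n=L(x;u^s-1)$ for $x\in\mathbb{Z}_2^\times$, which also holds as $0=0$ when $x\in 2\mathbb{Z}_2$ by the conventions $\langle x\rangle=0$ and $L(x;T)=0$. Applying it with $x=N\mathfrak{c}$ gives $\chi(\mathfrak{c})\langle N\mathfrak{c}\rangle^{s}-1=\chi(\mathfrak{c})\bigl(1+(u^s-1)\bigr)-1=B(\chi;u^s-1)$; applying it with $x=N\mathfrak{p}$, a $2$-adic unit since $\mathfrak{p}\nmid 2$, turns each factor $1-\chi(\mathfrak{p})N\mathfrak{p}^{-1}\langle N\mathfrak{p}\rangle^{s}$ into $1-\chi(\mathfrak{p})N\mathfrak{p}^{-1}L(N\mathfrak{p};u^s-1)$; and applying it with $x=N\mathfrak{a}\,N(x_1+x_2\gamma)$ makes the integrand of $\mathcal{Z}_{\mathfrak{f}}(\mathfrak{a},\mathfrak{c};1-s)$ equal to that of $R(\mathfrak{a},\mathfrak{c};T)$ evaluated at $T=u^s-1$.

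It then remains to see that $R(\mathfrak{a},\mathfrak{c};u^s-1)=\mathcal{Z}_{\mathfrak{f}}(\mathfrak{a},\mathfrak{c};1-s)$, i.e.\ that evaluating $R(\mathfrak{a},\mathfrak{c};T)\in\mathbb{Z}_2[[T]]$ at $T=u^s-1$ commutes with $\int\,d\mu_{\mathfrak{a}}$. Here one uses that the integrand has all its coefficients in $\mathbb{Z}_2$ — it vanishes where $N\mathfrak{a}\,N(x_1+x_2\gamma)\in 2\mathbb{Z}_2$, and its $n$-th coefficient is the $2$-adically integral $\binom{\mathcal{L}(x)}{n}/x$ otherwise — together with the fact that $\mu_{\mathfrak{a}}$ has norm $1$ and that $u^s-1\in 4\mathbb{Z}_2$; then the partial sums converge uniformly in $(x_1,x_2)$, term-by-term integration is legitimate, and the result is exactly the value at $T=u^s-1$ of $R$. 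Pulling the finite product over $\mathfrak{p}$ and the finite sum over $\sigma\in G$ through this specialization and reassembling then yields $L_{F,S}(1-s,\chi)=A(\chi;u^s-1)/B(\chi;u^s-1)$ for all $s\in\mathbb{Z}_2$. The main — and essentially the only — obstacle is this uniform-convergence justification for commuting the partial zeta integral with the variable specialization; everything else is bookkeeping around the change of variable $s\mapsto 1-s$ and the convention $\langle x\rangle=0$ on $2\mathbb{Z}_2$.
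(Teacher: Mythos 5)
Your argument is correct and is exactly the derivation the paper has in mind: Proposition~5.1 is stated without proof there, being regarded as immediate from the integral representation of $\mathcal{Z}_{\mathfrak{f}}$, the identity $\langle x\rangle^{s}=L(x;u^{s}-1)$, and the definitions of $A$, $B$ and $R$. Your extra care about interchanging the specialization $T\mapsto u^{s}-1$ with the integral (uniform convergence against the norm-$1$ measure, using $u^{s}-1\in4\mathbb{Z}_2$ and $\binom{\mathcal{L}(x)}{n}\in\mathbb{Z}_2$) is precisely the routine justification the paper leaves implicit.
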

We now specialize to our situation. For that, we need to make the additional assumption that $\beta^2(\mathfrak{c}) = -1$, so $\beta(\mathfrak{c})$ is a fourth root of unity in $\Qbar_2^c$ that we will denote by $i$. Thus, we have
\begin{align*}
& B(1; T) = T,\ B(\beta; T) = i(T+1)-1, \\
& B(\beta^2; T) = -T-2,\ B(\beta^3; T) = -i(T+1)-1. 
\end{align*}
Let  $x \mapsto \bar{x}$ be the $\Qbar_2$-automorphism of $\Qbar_2(i)$ sending $i$ to $-i$. Then we have $\overline{L_{F,S}(1-s, \beta)} = L_{F,S}(1-s, \beta^3)$ by the expression of $L_{F,S}(s, \chi)$ given at the beginning of the section since the twisted partial zeta functions have values of $\Qbar_2$ and $\bar\beta = \beta^3$. And furthermore, 
$$
L_{F,S}(s, \beta^3) = L_{\Qbar,S}(s, \text{Ind}_C^G(\beta^3)) =  L_{\Qbar,S}(s, \text{Ind}_C^G(\beta)) = L_{F,S}(s, \beta).
$$
Therefore, by Prop.~\ref{interpol}, we deduce that
\begin{align*}
A(\beta; u^s-1) + \bar A(\beta; u^s-1) & =  \big(B(\beta; T) + B(\beta^3; T)\big) L_{F,S}(1-s, \beta) \\
& =  -2 L_{F,S}(1-s, \beta).
\end{align*}
Since 
$$
f_1(s) = \frac{\rho_{F,S} \log u}{8(u^{1-s}-1)} + \frac{1}{8}\left(L_{F,S}(s,1) + L_{F,S}(s, \beta^2) - 2L_{F,S}(s, \beta)\right)
$$
we find that
\begin{equation*}
F_1(T) = \frac{\rho_{F,S} \log u}{8T} + \frac{1}{8} \left(\frac{A(1; T)}{T} - \frac{A(\beta^2; T)}{T+2} + A(\beta; T) + \bar{A}(\beta, T)\right)
\end{equation*}
is such that $F_1(u^n-1) = f_1(1-n)$ for $n = 1, 2, 3, \dots$. 

\smallskip

The conjecture that we wish to check states that
$$
\frac{1}{2}\rho_{F,S} \in 4\mathbb{Z}_2 \quad\text{and}\quad F_1(T) \in 4\mathbb{Z}_2[[T]].  
$$
Now define $D(T) = 8T(T+2) F_1(T)$, so that
\begin{multline*}
D(T) = (T+2) \left(\rho_{F,S} \log u + A(1; T)\right) \\
	- T A(\beta^2;T) + T(T+2)\left(A(\beta; T) + \bar{A}(\beta, T)\right).
\end{multline*}
We can now give a final reformulation of the conjecture which is the one that we actually tested. 
\begin{conj}
$$
\rho_{F,S} \in 8\mathbb{Z}_2 \quad\text{and}\quad D(T) \in 32\mathbb{Z}_2[[T]] 
$$
\end{conj}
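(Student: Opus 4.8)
The plan is to show that Conjecture~3 is merely a repackaging of Conjecture~\ref{conj1} --- hence, via the chain of reformulations of \S3--\S\ref{sec:rewriting}, of Conjecture~\ref{conj2} for $K_\infty/\Qbar$ --- so that the two displayed conditions are exactly the ones the numerical experiments of \S6 check. Two independent divisibilities are involved, and I would treat them separately. For $\rho_{F,S}$: since $\tfrac12\rho_{F,S}\in\Z_2$ is already known, the assertion ``$\rho_{F,S}\in 8\Z_2$'' is literally the same as ``$\tfrac12\rho_{F,S}\in 4\Z_2$'', which is the first clause of Conjecture~\ref{conj1}. For the power series I would use the definition $D(T)=8T(T+2)F_1(T)$ together with the already established fact $F_1(T)\in\Z_2[[T]]$, and prove the equivalence
$$
D(T)\in 32\Z_2[[T]]\iff F_1(T)\in 4\Z_2[[T]].
$$

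One direction is immediate: if $F_1(T)\in 4\Z_2[[T]]$ then $D(T)=8T(T+2)F_1(T)\in 8\cdot 4\cdot\Z_2[[T]]=32\Z_2[[T]]$. The converse is the only part needing an argument, and the point to watch is that $T+2$ is \emph{not} $2$ times a unit of $\Z_2[[T]]$, so one cannot simply divide $D(T)$ by $8T(T+2)$. Instead I would compare coefficients: writing $F_1(T)=\sum_{j\ge0}a_jT^j$ with $a_j\in\Z_2$ (so $a_j=x_{j+1}$ in the notation of \S1), one has
$$
D(T)=8\sum_{j\ge0}a_j\bigl(T^{j+2}+2T^{j+1}\bigr),
$$
so the coefficient of $T^k$ in $D(T)$ is $0$ for $k=0$, $16a_0$ for $k=1$, and $8a_{k-2}+16a_{k-1}$ for $k\ge2$. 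Hence $D(T)\in 32\Z_2[[T]]$ is equivalent to the infinite system of congruences $a_j+2a_{j+1}\equiv 0\pmod{4\Z_2}$ for all $j\ge0$ (which already subsumes $16a_0\in 32\Z_2$). Now from $a_j\equiv-2a_{j+1}\pmod4$ every $a_j$ is even; writing $a_j=2b_j$ turns the congruence into $2b_j\equiv0\pmod4$, i.e. $b_j$ even, i.e. $a_j\in 4\Z_2$, for every $j$ --- which is exactly $F_1(T)\in 4\Z_2[[T]]$.

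Finally I would record that the displayed expansion of $D(T)$ is obtained by nothing more than multiplying the formula for $F_1(T)$ of \S5 through by $8T(T+2)$: the factor $T$ absorbs the apparent pole at $T=0$ coming from $\rho_{F,S}\log u/(8T)$ and $A(1;T)/(8T)$, the factor $T+2$ absorbs the apparent pole at $T=-2$ coming from $A(\beta^2;T)/(T+2)$, and the term $A(\beta;T)+\bar A(\beta;T)$ lies in $\Z_2[[T]]$ via the trace $\Z_2[i]\to\Z_2$, so the right-hand side of the expression for $D(T)$ is manifestly in $\Z_2[[T]]$ --- consistently with $D(T)=8T(T+2)F_1(T)$ and $F_1(T)\in\Z_2[[T]]$. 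No further input is needed. The one genuine obstacle is the non-invertibility of $T+2$ in $\Z_2[[T]]$, which is precisely why one passes to the coefficientwise induction above rather than dividing; everything else is bookkeeping. Combining the two equivalences with \S3--\S\ref{sec:rewriting} shows that Conjecture~3 holds if and only if Conjecture~\ref{conj2} does for $K_\infty/\Qbar$, and it is in this final shape that it becomes amenable to the computation described in \S6.
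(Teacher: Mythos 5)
Your proposal is correct, and the overall route — identifying Conjecture~3 as a reformulation of Conjecture~\ref{conj1} via the relation $D(T)=8T(T+2)F_1(T)$, and hence of Conjecture~\ref{conj2} for $K_\infty/\Qbar$ by \S3--\S\ref{sec:rewriting} — is exactly the paper's. Where you add genuine value is the converse direction $D(T)\in 32\Z_2[[T]]\Rightarrow F_1(T)\in 4\Z_2[[T]]$: the paper simply presents Conjecture~3 as ``a final reformulation'' without comment, whereas you rightly flag that $T+2$ is not $2$ times a unit of $\Z_2[[T]]$ (its constant coefficient $2$ is not a unit), so one cannot just cancel $8T(T+2)$. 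Your coefficientwise argument --- reducing $D(T)\in 32\Z_2[[T]]$ to the system $a_j+2a_{j+1}\equiv 0\pmod 4$ for $j\ge 0$, deducing first that each $a_j$ is even and then, by the substitution $a_j=2b_j$, that each $a_j\in 4\Z_2$ --- is correct and complete, and it is the logical direction actually needed for the numerical confirmations of \S6 to carry weight (a verified Conjecture~3 must imply Conjecture~\ref{conj1}). The two clauses of Conjecture~3 are handled independently, as they should be, with $\rho_{F,S}\in 8\Z_2\iff \tfrac12\rho_{F,S}\in 4\Z_2$ being immediate. In short: same approach as the paper, plus a small but worthwhile explicit argument for the nontrivial implication the paper leaves unstated.
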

The computation of $\rho_{F,S}$ is done using the following formula [Col]
$$
\rho_{F,S} = 2 \, h_F \, R_F d_F^{-1/2} \, \prod_{\fP}\left(1 -
  1/N(\fP)\right) 
$$
where $h_F$, $R_F$, $d_F$ are respectively the class number, $2$-adic regulator and discriminant of $F$ and $\fP$ runs through all primes of $F$ above $2$. Note that although $R_F$ and $d_F^{-1/2}$ are only defined up to sign, the quantity $R_Fd_F^{-1/2}$ is uniquely determined in the following way: Let $\iota$ be the embedding of $F$ into $\mathbb{R}$ for which $\sqrt{d_F}$ is positive and let $\varepsilon$ be the fundamental unit of $F$ such that $\iota(\varepsilon) > 1$. Then for any embedding $g$ of $F$ into $\Qbar^{\,c}_2$, we have
$$
R_F d_F^{-1/2} = \frac{\log_2 g(\varepsilon)}{g(\sqrt{d})}\,.
$$

Now, for the computation of $D(T)$, the only difficult part is the computations of the $R(\mathfrak{a}, \mathfrak{c}; T)$. The measures $\mu_{\mathfrak{a}}$ are computed explicitly using the methods of [Rob] (see also [BBJR]), that is we construct a power series $M_{\mathfrak{a}}(X_1, X_2)$ in $\Qbar_2[X_1, X_2]$ with integral coefficients, such that
$$
\int (1+t_1)^{x_1} (1+t_2)^{x_2} \, d\mu_{\mathfrak{A}}(x_1, x_2) = M_{\mathfrak{A}}(t_1, t_2) \quad \text{ for all } t_1, t_2 \in 2\mathbb{Z}_2. 
$$
In particular, if $f$ is a continuous function on $\mathbb{Z}_2^2$ with values in $\Cbar_2$ and Mahler expansion
$$
f(x_1, x_2) = \sum_{n_1, n_2 \geq 0} f_{n_1, n_2} \binom{x_1}{n_1}\binom{x_2}{n_2} 
$$
then we have
$$
\int f(x_1, x_2) \, d\mu_{\mathfrak{A}}(x_1, x_2) = \sum_{n_1, n_2 \geq 0} f_{n_1, n_2} m_{n_1,n2} 
$$
where $M_{\mathfrak{A}}(X_1, X_2) = \sum\limits_{n_1, n_2 \geq 0} m_{n_1, n_2} X_1^{n_1} X_2^{n_2}$.

We compute this way the first few coefficients of the power series $A(\chi; T)$, for $\chi = \beta^j$, $j = 0,1,2,3$, and then deduce the first coefficients of $D(T)$ to see if they do indeed belong to $32\mathbb{Z}_2[[T]]$. We found that this was indeed always the case; see next section for more details. 

To conclude this section, we remark that, in fact, we do not need the above formula to compute $\rho_{F,S}$ since the constant coefficient of $A(1;T)$ is $-\rho_{F,S} \, \log u$. (This can be seen directly from the expression of $x_0$ given at the end of Section~\ref{sec:rewriting} or using the fact that $D(T)$ has zero constant coefficient since $F_1(T) \in \mathbb{Z}_2[[T]]$.) However, we did compute it using this formula since it then provides a neat way to check that (at least one coefficient of) $A(1;T)$ is correct. 

\section{The numerical verifications}

We have tested the conjecture in $60$ examples. The examples are separated in three subcases of $20$ examples according to the way $2$ decomposes in the quadratic subfield $F$: ramified, split or inert. In each subcase, the examples are actually the first $20$ extensions $K/\mathbb{Q}$ of the suitable form of the smallest discriminant. These are given in the following three tables of Figure 2 where the entries are: the discriminant $d_F$ of $F$, the conductor $\mathfrak{f}$ of $K/F$ (which is always a rational integer) and the discriminant $d_K$ of $K$. In each example, we have computed $\rho_{F,S}$ and the first $30$ coefficients of $D(T)$ to a precision of at least $2^8$ and checked that they satisfy the conjecture.

\begin{scriptsize}
\begin{figure}[htp]
\begin{tabular}{ccc}
$\begin{array}{r|r|r}
\multicolumn{3}{c}{\text{$2$ ramified in $F$}} \\
\hline
d_F & \mathfrak{f} & d_K \\
\hline
   44    &   3 &       2\,732\,361\,984 \\
   156  &   2 &       9\,475\,854\,336 \\
   220  &   2 &     37\,480\,960\,000 \\
   12    & 14 &     39\,033\,114\,624 \\
   156  &   4 &    151\,613\,669\,376 \\
   380  &   2 &    333\,621\,760\,000 \\
   152  &   3 &    389\,136\,420\,864 \\
    24   & 11 &    587\,761\,422\,336 \\
   876  &   1 &    588\,865\,925\,376 \\
   220  &   4 &    599\,695\,360\,000 \\
   444  &   2 &    621\,801\,639\,936 \\
    12   & 28 &    624\,529\,833\,984 \\
    44   & 12 &    699\,484\,667\,904 \\
    92   &   6 &    835\,600\,748\,544 \\
    60   &   8 &    849\,346\,560\,000 \\
    44   & 10 &    937\,024\,000\,000 \\
    12   & 19 &    975\,543\,388\,416 \\
    12   & 26 &   1\,601\,419\,382\,784 \\
    44   & 15 &   1\,707\,726\,240\,000 \\
 1\,164 &  1 &   1\,835\,743\,170\,816 
\end{array}
$ & $
\begin{array}{r|r|r}
\multicolumn{3}{c}{\text{$2$ inert in $F$}} \\
\hline
d_F & \mathfrak{f} & d_K \\
\hline
  445  &    1 &        39\,213\,900\,625 \\
   5    &   21 &        53\,603\,825\,625 \\
  205  &    3 &       143\,054\,150\,625 \\
  221  &    3 &       193\,220\,905\,761 \\
   61   &    5 &       216\,341\,265\,625 \\
  205  &    4 &       452\,121\,760\,000 \\
  221  &    4 &       610\,673\,479\,936 \\
  901  &    1 &       659\,020\,863\,601 \\
   29   &  15 &       895\,152\,515\,625 \\
1\,045 &   1 &    1\,192\,518\,600\,625 \\
    5    &  16 &    1\,911\,029\,760\,000 \\
   109  &   5 &    2\,205\,596\,265\,625 \\
1\,221 &   1 &    2\,222\,606\,887\,281 \\
    29   & 20 &    2\,829\,124\,000\,000 \\
    29   & 13 &    3\,413\,910\,296\,329 \\
   205  &   7 &    4\,240\,407\,600\,625 \\
   149  &   5 &    7\,701\,318\,765\,625 \\
1\,677 &   1 &    7\,909\,194\,404\,241 \\
    21   & 19 &    9\,149\,529\,982\,761 \\
   341  &   3 &    9\,857\,006\,530\,569 \\
\end{array}
$ & $
\begin{array}{r|r|r}
\multicolumn{3}{c}{\text{$2$ split in $F$}} \\
\hline
d_F & \mathfrak{f} & d_K \\
\hline
   145  &   1 &              442\,050\,625 \\
    41   &   5 &         44\,152\,515\,625 \\
   505  &   1 &         65\,037\,750\,625 \\
   689  &   1 &        225\,360\,027\,841 \\
   777  &   1 &        364\,488\,705\,441 \\
   793  &   1 &        395\,451\,064\,801 \\
    17   & 13 &        403\,139\,914\,489 \\
   897  &   1 &        647\,395\,642\,881 \\
   905  &   1 &        670\,801\,950\,625 \\
   305  &   3 &        700\,945\,700\,625 \\
   377  &   3 &     1\,636\,252\,863\,921 \\
1\,145 &   1 &     1\,718\,786\,550\,625 \\
   145  &   8 &     1\,810\,639\,360\,000 \\
   305  &   4 &     2\,215\,334\,560\,000 \\
1\,313 &   1 &     2\,972\,069\,112\,961 \\
   377  &   4 &     5\,171\,367\,076\,096 \\
   545  &   3 &     7\,146\,131\,900\,625 \\
    17   & 21 &     7\,163\,272\,192\,041 \\
1\,705 &   1 &     8\,450\,794\,350\,625 \\
   329  &   3 &     8\,541\,047\,165\,049 \\
\end{array}
$
\end{tabular}
\caption{}
\end{figure}
\end{scriptsize}

We now give an example, namely the smallest example for the discriminant of $K$. We have $F = \Qbar(\sqrt{145})$
and $K$ is the Hilbert class field of $F$. The prime $2$ is split in $F/\Qbar$ and the primes above $2$ in $F$ are inert in $K/F$. We compute $\rho_{F,S}$ and find that
$$
\rho_{F,S} \equiv 2^7 \pmod{2^8}
$$

Using the method of the previous section, we compute the first $30$ coefficients of the power series $A(\,\cdot\,;T)$ to a $2$-adic precision of $2^8$. We get
\begin{align*}
A(1; T) \equiv 2^2 \big( & 16T + 57T^3 + 44T^4 + 8T^5 + 40T^6 + 21T^7 + 40T^8 + 30T^9 \\ 
  & + 16T^{10} + 49T^{11} + 56T^{12} + 29T^{13} + 32T^{14} + 50T^{15}  \\
  & + 62T^{16} + 47T^{17} + 48T^{18} + 60T^{19} + 32T^{20} + 16T^{21}  \\
  & + 8T^{22} + 21T^{23} + 30T^{24} + 26T^{25} + 2T^{26} + 9T^{27} \\
  & + 56T^{28} + 34T^{29} \big) + O(T^{30}) \pmod{2^8}
\end{align*}

\begin{align*}
  A(\beta; T) \equiv 2^2 \big( & (28 + 1124i) + (36 + 1728i)T + (47 + 45i)T^2 + (56 + 153i)T^3 \\ 
  & + (46 + 154i)T^4 + (56 + 282i)T^5 + (55 + 433i)T^6 \\
  & + (54 + 435i)T^7 + (40 + 386i)T^8 + (48 + 392i)T^9 \\
  & + (63 + 65i)T^{10} + (48 + 257i)T^{11} + (63 + 161i)T^{12} \\
  & + (20 + 477i)T^{13} + (38 + 182i)T^{14} + (56 + 66i)T^{15} \\
  & + (37 + 35i)T^{16} + (6 + 341i)T^{17} + (20 + 446i)T^{18} \\ 
  & + (40 + 412i)T^{19} + 368iT^{20} + (56 + 336i)T^{21} \\
  & + (61 + 291i)T^{22} + (40 + 427i)T^{23} + (34 + 38i)T^{24} \\
  & + (48 + 94i)T^{25} + (9 + 47i)T^{26} + (6 + 497i)T^{27} \\
  & + (40 + 42i)T^{28} + (44 + 52i)T^{29} \big) + O(T^{30}) \pmod{2^8}
\end{align*}

\begin{align*}
  A(\beta^2; T) \equiv 2^2 \big( & 32 + 32T + 22T^2 + 39T^3 + 36T^4 + 20T^5 + 62T^6 + 27T^7 \\ 
  & + 16T^8 + 62T^9 + 46T^{10} + 23T^{11} + 30T^{12} + 51T^{13} \\
  & + 4T^{14} + 2T^{15} + 56T^{16} + 33T^{17} + 44T^{18} + 12T^{19} \\
  & + 40T^{20} + 8T^{21} + 54T^{22} + 11T^{23} + 34T^{24} + 42T^{25} \\
  & + 43T^{27} + 56T^{28} + 46T^{29} \big) + O(T^{30}) \pmod{2^8}
\end{align*}

Therefore 
\begin{align*}
  D(T) \equiv 2^5 \big( & 6T + 7T^2 + 4T^3 + 5T^4 + 4T^7 + 2T^8 + 4T^9 + 2T^{10} + 4T^{11} \\ 
  & + T^{12} + 6T^{13} + 7T^{14} + 3T^{16} + 5T^{17} + 2T^{18} + 3T^{19} \\ 
  & + 7T^{20} + 5T^{21} + 7T^{22} + 4T^{23} + 4T^{24} + T^{25} + 7T^{26} \\ 
  & + 3T^{27} + 7T^{28} + 6T^{29} \big) + O(T^{30}) \pmod{2^8}
\end{align*}
and the conjecture is satisfied by the first $30$ coefficients of the series $D$ associated to the extension.

\smallskip

Note, as a final remark, that we have tested the conjecture in the same way for $30$ additional examples where $F$ is real quadratic, $K/F$ is cyclic of order $4$ but $K$ is not a dihedral extension of $\Qbar$ (either $K/\Qbar$ is not Galois or its Galois group is not the dihedral group of order $8$). In all of these examples, we found that the conjecture was not satisfied, that is either $\rho_{F,S}$ did not belong to $8\mathbb{Z}_2$ or one of the first $30$ coefficients of the associated power series $D$ did not belong to $32\mathbb{Z}_2$.

\end{document}